\renewcommand{\b}[1]{\mathbf #1}
\newcommand{\ie}{i.\,e.\ }
\newcommand{\eg}{e.\,g.\ }
\newcommand{\dt}{\Delta t}
\newcommand{\bsigma}{\boldsymbol{\sigma}}
\renewcommand{\O}{\mathcal O}
\newcommand{\R}{\mathbb R}
\newcommand{\qta}{\quad\text{ and }\quad}
\DeclareMathOperator{\NB}{NB}
\DeclareDocumentCommand\ct{o}{\Forest{decision tree [#1]}}
\DeclareDocumentCommand\rt{o}{\Forest{rooted tree [#1]}}
\newcommand{\bssigma}{\boldsymbol{\sigma}}
\newcommand{\byi}{\b y^{(i)}}
\newcommand{\byj}{\b y^{(j)}}
\newcommand{\yi}{y^{(i)}}
\newcommand{\fnu}{\mathbf{f}^{[\nu]}}
\newcommand{\dF}{\mathcal{F}}
\definecolor{colorA}{rgb}{0,0.447,0.741}
\definecolor{colorB}{rgb}{0.85,0.325,0.098}
\definecolor{colorE}{rgb}{0.929,0.694,0.125}
\definecolor{colorF}{rgb}{0.494,0.184,0.556}
\definecolor{colorD}{rgb}{0.466,0.674,0.188}
\definecolor{colorC}{rgb}{0.301,0.745,0.933}
\definecolor{colorG}{rgb}{0.635,0.078,0.184}
\newlength{\tickl}    
\tikzset{axes/.style={thick,-latex}}
\tikzset{lineplot/.style={thick}}
\tikzset{arrow/.style={thick,-latex}} 
\tikzset{thick arrow/.style={ultra thick,-latex}} 
\tikzset{grid lines/.style={very thin,gray!30}}	
\tikzset{point/.style={radius=2pt}}
\tikzset{help line/.style={black,thin,dashed}} 
\newsavebox{\measure@tikzpicture}
	\def\tikz@width{#1}%
\tikzset{mylabel/.style  args={at #1 #2  with #3}{
		postaction={decorate,
			decoration={
				markings,
				mark= at position #1
				with  \node [#2] {#3};
} } } }
\theoremstyle{thmstyleone}%
\newtheorem{theorem}{Theorem}
\newtheorem{lemma}[theorem]{Lemma}
\theoremstyle{thmstyletwo}%
\theoremstyle{thmstylethree}%
\newtheorem{definition}{Definition}%
\begin{document}

\title[Dense Output for MPRK Schemes]{A Boot-Strapping Technique to Design Dense Output Formulae for Modified Patankar--Runge--Kutta Methods}


\author*[1]{\fnm{Thomas} \sur{Izgin}}\email{izgin@mathematik.uni-kassel.de}

%

\affil*[1]{\orgdiv{Department of Mathematics}, \orgname{University of Kassel}, \orgaddress{\street{Heinrich-Plett-Str. 40}, \city{Kassel}, \postcode{34132}, \state{Hessen}, \country{Germany}}, ORCID:~0000-0003-3235-210X}




\abstract{In this work modified Patankar--Runge--Kutta (MPRK) schemes up to order four are considered and equipped with a dense output formula of appropriate accuracy. Since these time integrators are conservative and positivity preserving for any time step size, we impose the same requirements on the corresponding dense output formula. In particular, we discover that there is an explicit first order formula. However, to develop a boot-strapping technique we propose to use implicit formulae which naturally fit into the framework of MPRK schemes. In particular, if lower order MPRK schemes are used to construct methods of higher order, the same can be done with the dense output formulae we propose in this work. We explicitly construct formulae up to order three and demonstrate how to generalize this approach as long as the underlying Runge--Kutta method possesses a dense output formulae of appropriate accuracy. 
	
We also note that even though linear systems have to be solved to compute an approximation for intermediate points in time using these higher order dense output formulae, the overall computational effort is reduced compared to using the scheme with a smaller step size.   }

\keywords{Dense output formulae, Boot-strapping process, Modified Patankar--Runge--Kutta schemes, Unconditional positivity, Conservativity}


\pacs[MSC Classification]{65L05, 65L20}

\maketitle

\section{Introduction}
The first \emph{modified Patankar--Runge--Kutta} (MPRK) method was introduced in 2003 based on the explicit Euler method \cite{BDM03}. The resulting \emph{modified Patankar--Euler} (MPE) method is proven to be first order accurate, \emph{unconditionally positive} and \emph{conservative}. Unconditional positivity means that the method produces positive approximations for all time step sizes $\dt>0$ whenever the initial data is positive. Additionally, conservativity means that the sum of all constituents of the numerical approximation in any time step and for any $\dt>0$ equals the sum of the constituents of the initial data. While these two properties are also guaranteed by the implicit Euler method, the advantage of the MPE scheme is that it only requires the solution of a linear system of equations at each time step, even for nonlinear differential equations. Furthermore, unconditional positivity for linear methods such as Runge--Kutta (RK) schemes can only be guaranteed by a first order scheme \cite{Sandu02,BC78}. However, MPRK methods do not fall into this class of methods as they are nonlinear even for linear problems, see for example \cite{IKM2122}. Indeed, besides second and third order MPRK schemes \cite{KM18, KM18Order3}, there are even arbitrary high order modified Patankar-type (MP) schemes based on Deferred Correction (MPDeC) methods \cite{MPDeC}, all of which are unconditionally positive. Because of the nonlinearity of these methods a stability analysis and a comprehensive framework for deriving order conditions was only developed recently \cite{izgin2022stability,NSARK}, see also \cite{IzginThesis} for an overview on Patankar-type schemes and their analysis. 

It is worth noting that MPRK schemes based on an $s$-stage RK method require the solution of at least $s$ linear systems in each time step. Hence, it is worth reducing the computational cost by, for instance, equipping the methods with a time step controller, which is done in \cite{IR20233}. Another way to reduce the overall computational cost is the design of a \emph{dense output formula}  \cite{EJNT1986}. The idea of such a formula, also known as \emph{contiunous extension} \cite{Zennaro1986} is to obtain an approximation of the same order of convergence at any given point in time from the numerical approximation at finite times and a comparably small additional computational cost. However, since the unique selling point of MPRK schemes is to be unconditionally positive and conservative, the same requirements should be applied to the dense output formula. 

It is also common to use lower order dense output formulae to construct one of higher order. The corresponding algorithm is called \emph{boot-strapping process} \cite{EJNT1986}. However, besides MPDeC there are currently only MPRK schemes up to order four known. Still, in view of this active research field a boot-strapping process for even higher order MPRK schemes is of interest. Altogether, designing the first dense output formulae for all MPRK schemes up to order four, and developing such a boot-strapping process for higher order MPRK methods is the purpose of the present work. 

In the upcoming section we first briefly introduce MPRK schemes and present preliminary results which are needed in this work. We then construct a first order dense output formula starting the boot-strapping process and elaborate several approaches for constructing higher order dense output formulae discussing their unconditional positivity and conservativity. Finally, we present the boot-strapping process together with formulae for schemes up to fourth order and conclude this work with a summary and an outlook. 

\section{Preliminaries}
Modified Patankar--Runge--Kutta (MPRK) schemes were originally introduced to approximate the solution of a so-called \emph{positive} and \emph{conservative} autonomous \emph{production-destruction system} (PDS)
\begin{equation} \label{eq:PDS}
	y_k'(t)= f_k(\b y(t))=\sum_{\nu=1}^N (p_{k\nu}(\b y(t))-d_{k\nu}(\b y(t))),\quad k=1,\dotsc,N,\quad \b y(0)=\b y^0>\b 0
\end{equation}
with $p_{k\nu}(\b y),d_{k\nu}(\b y)\geq 0$ for $\b y>\b 0$ (componentwise). Here, positivity means that $\b y(0)>\b 0$ implies $\b y(t)>\b 0$ for all $t>0$ and conservativity means $p_{k\nu}=d_{\nu k}$ for all $k,\nu=1,\dotsc,N$.

\begin{definition}\label{def:MPRKdefn}
	Given an explicit $s$-stage RK method described by a non-negative Butcher array, \ie $\b A,\b b,\b c \geq\b 0$ we define the corresponding MPRK schemes applied to \eqref{eq:PDS} by
	\begin{equation}\label{eq:MPRK}
	\begin{aligned} 
		\yi_k & = y^n_k + \dt \sum_{j=1}^{i-1} a_{ij} \sum_{\nu=1}^N \left( p_{k\nu}(\byj) \frac{\yi_\nu}{\pi_\nu^{(i)}} - d_{k\nu}(\byj)\frac{\yi_k}{\pi_k^{(i)}} \right),  \quad i=1,\dotsc,s, \\
		y^{n+1}_k & = y^n_k + \dt \sum_{j=1}^s b_j \sum_{\nu=1}^N \left( p_{k\nu}(\byj)\frac{y^{n+1}_\nu}{\sigma_\nu} - d_{k\nu}(\byj) \frac{y^{n+1}_k}{\sigma_k} \right),\quad  k=1,\dotsc,N.
	\end{aligned}
\end{equation}
	where $\pi_\nu^{(i)},\sigma_\nu$ are the so-called \emph{Patankar-weight denominators} (PWDs) and positive for any $\dt\geq 0$ as well as independent of the corresponding numerators $y_k^{(i)}$ and $y_k^{n+1}$, respectively. 
\end{definition}
The unconditional positivity of these schemes is then proved by showing that the mass matrices defining the linear systems required to compute the stages and the update $\b y^{n+1}$ are $M$-matrices, \ie have non-negative inverses \cite{KM18}. Since Definition~\ref{def:MPRKdefn} does not specify the PWDs, one may ask what conditions need to be satisfied by them to ensure that the method is of a certain order. In what follows we briefly summarize the corresponding results of interest from \cite{NSARK}. 
\subsection{Order Conditions}
We want to emphasize here, that this section does not reflect all technical details discussed in \cite{NSARK} but rather gives the main ideas and results important for the present work. The main idea is to interpret MPRK schemes as additive Runge--Kutta (ARK) methods with a solution dependent Butcher tableau. This interpretation is valid, since a PDS \eqref{eq:PDS} represents a special additive splitting
\[\b f(\b y(t))=\sum_{\substack{\nu=1}}^N  \fnu(\b y(t))\]
of the right-hand side using
\[f^{[\nu]}_k(\b y(t))=\begin{cases}p_{k\nu}(\b y(t)), &k\neq \nu, \\- \sum_{\mu=1}^Nd_{k\mu}(\b y(t)), &k=\nu,\end{cases}\]
see \cite[Remark 2.25]{IzginThesis}.
For ARK methods, the framework for deriving order conditions is based on truncated NB-series \[\NB_p(u,\b y)=\b y+ \sum_{\tau\in NT_p}\frac{\dt^{\lvert \tau\rvert}}{\sigma(\tau)}u(\tau)\dF(\tau)(\b y).\]
Here, $\tau$ is a colored rooted tree \cite{Ntrees}, in which each node possesses one of $N$ possible colors from the set $\{1,\dotsc, N\}$. The set of all such \emph{$N$-trees} is denoted by $NT$, and the \emph{order} $\lvert\tau\rvert$  equals the number of its nodes. With that $NT_p$ is the set of all $N$-trees up to order $p$, where we set $NT_0\coloneqq \emptyset$ resulting in $\NB_0(u,\b y)=\b y.$ Furthermore, $\sigma$ is the \emph{symmetry} and $\dF$ represents an \emph{elementary differential}, see \cite{Ntrees} for the details. 

In general, a colored rooted tree $\tau$ with a root color $\nu$ can be written in terms of its colored \emph{children} $\tau_1,\dotsc,\tau_l$ by writing 
\begin{equation}\label{eq:tau}
	\tau=[\tau_1,\dotsc,\tau_l]^{[\nu]},
\end{equation}
where the children $\tau_1,\dotsc,\tau_l$ are the connected components of $\tau$ when the root together with its edges are removed. Moreover, the neighbors of the root of $\tau$ are the roots of the corresponding children.
Also, a tree with a single node and color $\nu$ is represented as $\rt[]^{[\nu]}$. 
%
%
The main idea is now to write both the analytic and numerical solution in terms of a truncated NB-series. Indeed, introducing the \emph{density} $\gamma$ for $\tau$ from \eqref{eq:tau} as
\[\gamma(\tau)=\lvert \tau\rvert \prod_{i=1}^l\gamma(\tau_i),\quad\gamma(\rt[]^{[\nu]})=1,\quad \nu=1,\dotsc,N,\] 
we have the following result.
\begin{theorem}[{\cite[Theorem 1]{Ntrees}}]\label{thm:anasolNB}
	Let $\fnu\in \mathcal C^{p+1}$ for $\nu=1,\dotsc,N$. Then the analytic solution $\b y$ can be written as
	\[\b y(t+\dt)=\NB_p(\tfrac{1}{\gamma},\b y(t)) +\O(\dt^{p+1}).\]
\end{theorem}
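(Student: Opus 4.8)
The plan is to combine a Taylor expansion of the exact flow with an induction over the order $\lvert\tau\rvert$ of the $N$-trees, mirroring the classical B-series argument but carrying along the node colours. First I would observe that, since $\b y'=\b f(\b y)=\sum_{\nu=1}^N\fnu(\b y)$ with each $\fnu\in\mathcal C^{p+1}$, repeatedly differentiating $t\mapsto\b y(t)$ by the chain and product rules expresses every derivative $\b y^{(q)}(t)$, $1\le q\le p$, as a finite linear combination of elementary differentials $\dF(\tau)(\b y(t))$ with $\lvert\tau\rvert=q$, the colour of each node recording which summand $\fnu$ was differentiated at that stage. By Taylor's theorem with the $\mathcal C^{p+1}$ remainder this already yields $\b y(t+\dt)=\NB_p(a,\b y(t))+\O(\dt^{p+1})$ for some coefficient map $a\from NT_p\to\R$, the constant-in-$\dt$ term being $\b y(t)$ by definition of the NB-series; it remains to identify $a$.

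To pin down $a$ I would insert this ansatz into the differential equation and match coefficients order by order in $\dt$ (equivalently, induct on $\lvert\tau\rvert$). On the one hand,
\[\partial_{\dt}\NB_p(a,\b y(t))=\sum_{\tau\in NT_p}\frac{\lvert\tau\rvert\,\dt^{\lvert\tau\rvert-1}}{\sigma(\tau)}\,a(\tau)\,\dF(\tau)(\b y(t)),\]
and on the other hand $\partial_{\dt}\b y(t+\dt)=\b f(\b y(t+\dt))=\sum_{\nu=1}^N\fnu\big(\NB_p(a,\b y(t))\big)$. The structural fact I would invoke (the coloured composition rule for NB-series, \cite{Ntrees}) is that $\fnu$ applied to an NB-series is again an NB-series, whose coefficient on a tree $\tau=[\tau_1,\dots,\tau_l]^{[\nu]}$ is the product $a(\tau_1)\cdots a(\tau_l)$ of the children's coefficients, with elementary differential $\dF(\tau)=(\fnu)^{(l)}(\dF(\tau_1),\dots,\dF(\tau_l))$ and symmetry factor $\sigma(\tau)$ accounting for permutations of isomorphic children. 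Equating the coefficient of $\dF(\tau)(\b y(t))$ on both sides, the symmetry contributions match and cancel, leaving the recursion
\[\lvert\tau\rvert\,a(\tau)=\prod_{i=1}^l a(\tau_i),\qquad a(\rt[]^{[\nu]})=1,\quad \nu=1,\dots,N,\]
whose unique solution is $a(\tau)=1/\gamma(\tau)$ by the defining recursion of $\gamma$. Truncating consistently at order $p$ and absorbing the rest into $\O(\dt^{p+1})$ gives the claim.

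The step I expect to be the main obstacle is the bookkeeping in the composition rule: one must check that applying $\fnu$ to $\NB_p(a,\cdot)$ produces precisely the $N$-trees with root colour $\nu$, that the elementary differentials generated this way coincide with $\dF(\tau)$, and --- most delicately --- that the combinatorial symmetry factors appear on both sides so as to cancel, leaving the clean recursion above. An equivalent, slightly more elementary route is to prove by induction on $q$ that the $q$-th Taylor coefficient of the flow equals $\sum_{\lvert\tau\rvert=q}\frac{1}{\sigma(\tau)\gamma(\tau)}\dF(\tau)(\b y)$: differentiating the already-expanded $\b y^{(q)}$ once more corresponds to grafting a new leaf onto each existing node in turn, and the resulting multiplicities reorganise exactly into the $\gamma$- and $\sigma$-recursions --- the verification of this reorganisation being the crux in either formulation.
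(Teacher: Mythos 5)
The paper does not prove this theorem; it is imported verbatim from the reference cited in the theorem's bracket (\cite[Theorem~1]{Ntrees}), so there is no in-paper proof to compare against. Your proposal is nevertheless a correct reconstruction of the standard argument found in that reference and in the classical B-series literature: Taylor-expand the exact flow, observe that each $\b y^{(q)}$ is a linear combination of elementary differentials indexed by $N$-trees of order $q$ (with colours recording which summand $\fnu$ was used at each differentiation), write the answer as $\NB_p(a,\b y(t))+\O(\dt^{p+1})$ for an unknown coefficient map $a$, and then identify $a$ by substituting into $\b y'=\sum_\nu \fnu(\b y)$. The composition step you flag as the crux is indeed where the real work lies: one must verify that $\fnu$ applied to an NB-series produces exactly the trees with root colour $\nu$, that the elementary differentials reassemble as $\dF([\tau_1,\dots,\tau_l]^{[\nu]})=(\fnu)^{(l)}(\dF(\tau_1),\dots,\dF(\tau_l))$, and that the symmetry factors $\sigma(\tau)$ on both sides cancel so the coefficient recursion reduces cleanly to $\lvert\tau\rvert\,a(\tau)=\prod_i a(\tau_i)$ with $a(\rt[]^{[\nu]})=1$, whence $a=1/\gamma$. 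Your alternative, more elementary route --- proving by induction on $q$ that the $q$-th Taylor coefficient equals $\sum_{\lvert\tau\rvert=q}\frac{1}{\sigma(\tau)\gamma(\tau)}\dF(\tau)(\b y)$ via the grafting/leaf-attachment bookkeeping --- is likewise sound and is essentially the Hairer--N{\o}rsett--Wanner presentation adapted to coloured trees; either path is acceptable, and both hinge on the same combinatorial identity relating grafting multiplicities to the $\sigma$- and $\gamma$-recursions.
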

For the NB-series of the numerical solution, let us set 
\begin{equation}\label{eq:NSW}
	a_{ij}^{[\nu]}(\b y^n,\dt)=a_{ij}\frac{\yi_\nu}{\pi_{\nu}^{(i)}} \quad \text{ and } \quad b_j^{[\nu]}(\b y^n,\dt)=b_j\frac{y^{n+1}_\nu}{\sigma_\nu}
\end{equation}
for $i,j=1,\dotsc,s$ and $\nu=1,\dotsc,N$, where the dependence on $\b y^n$ and $\dt$ is given implicitly\footnote{We recall that while $\sigma$ is the symmetry, $\sigma_\nu$ with $\nu\in\{1,\dotsc, N\}$ or $\bsigma=(\sigma_1,\dotsc,\sigma_N)^T$ is a Patankar-weight denominator (vector).}. Next, following \cite{NSARK}, we introduce
\begin{equation}\label{eq:pertcond}
	\begin{aligned}
		u(\tau,\b y^n,\dt)&=\sum_{\nu=1}^N\sum_{i=1}^sb_i^{[\nu]}(\b y^n,\dt) g_i^{[\nu]}(\tau,\b y^n,\dt),\\ 
		g_i^{[\nu]}(\rt[]^{[\mu]},\b y^n,\dt)&=\delta_{\nu\mu},\quad \nu,\mu=1,\dotsc,N, \\
		g_i^{[\nu]}([\tau_1,\dotsc,\tau_l]^{[\mu]},\b y^n,\dt)&=\delta_{\nu\mu}\prod_{j=1}^ld_i(\tau_j,\b y^n,\dt),\quad \nu,\mu=1,\dotsc,N\text{ and } \\
		d_i(\tau,\b y^n,\dt)&=\sum_{\nu=1}^N\sum_{j=1}^sa_{ij}^{[\nu]}(\b y^n,\dt) g_j^{[\nu]}(\tau,\b y^n,\dt).
	\end{aligned}
\end{equation} 
The main result of \cite{NSARK} essentially states that $u$ from \eqref{eq:pertcond} is used for the NB-series of the numerical solution. Hence, comparing with Theorem~\ref{thm:anasolNB} it is shown that an MPRK scheme is of order $p$ if and only if
	\begin{equation}\label{eq:u=1:gamma}
	u(\tau, \b y^n,\dt)=\frac{1}{\gamma(\tau)}+\O(\dt^{p+1-\lvert \tau \rvert}), \quad \forall \tau\in NT_p.
\end{equation}
It is worth noting that these order conditions just equal the usual RK order conditions with two exceptions. First, the coefficients $a_{ij}$ and $b_j$ are replaced by the weighted ones from \eqref{eq:NSW}, and second, the order conditions tolerate a truncation error $\O(\dt^{p+1-\lvert \tau \rvert})$.

Another important observation from \cite{NSARK}, which will be used in this work is that if the MPRK scheme is of order $p$ then the PWD $\bssigma$ must be an $(p-1)$-th order approximation.
\begin{lemma}\label{lem:sigma}
	Let $\b A,\b b,\b c$ describe an explicit $s$-stage RK method of at least order $p$. Consider the corresponding MPRK scheme \eqref{eq:MPRK} and assume $p_{k\nu},d_{k\nu}\in \mathcal{C}^{p+1}$ for $k,\nu=1,\dotsc,N$. If the MPRK method is of order $p$, then
 	\[\bsigma=\NB_{p-1}(\tfrac1\gamma,\b y^n)+\O(\dt^{p}). \]
\end{lemma}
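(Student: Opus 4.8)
The plan is to extract the claim directly from the order conditions \eqref{eq:u=1:gamma} evaluated on the $N$ single-node trees $\rt[]^{[\mu]}$, $\mu=1,\dotsc,N$, combined with Theorem~\ref{thm:anasolNB}. The key observation is that on a single-node tree the quantity $u(\tau,\b y^n,\dt)$ from \eqref{eq:pertcond} collapses to a simple quotient. Indeed, the second identity in \eqref{eq:pertcond} gives $g_i^{[\nu]}(\rt[]^{[\mu]},\b y^n,\dt)=\delta_{\nu\mu}$, so that with \eqref{eq:NSW} and the consistency of the underlying RK method (the order-one condition $\sum_{i=1}^s b_i=1$, valid since $p\geq1$),
\[
	u(\rt[]^{[\mu]},\b y^n,\dt)=\sum_{\nu=1}^N\sum_{i=1}^s b_i\frac{y^{n+1}_\nu}{\sigma_\nu}\delta_{\nu\mu}=\frac{y^{n+1}_\mu}{\sigma_\mu}\sum_{i=1}^s b_i=\frac{y^{n+1}_\mu}{\sigma_\mu},\qquad \mu=1,\dotsc,N.
\]

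Next I would invoke the hypothesis that the MPRK scheme is of order $p$. Applying \eqref{eq:u=1:gamma} to $\tau=\rt[]^{[\mu]}$, for which $\lvert\tau\rvert=1$ and $\gamma(\rt[]^{[\mu]})=1$, yields $y^{n+1}_\mu/\sigma_\mu=1+\O(\dt^{p})$ for each $\mu$. Since $\b y^{n+1}=\b y^n+\O(\dt)$ stays bounded as $\dt\to0$, inverting this relation (using $(1+\O(\dt^p))^{-1}=1+\O(\dt^p)$) gives $\sigma_\mu=y^{n+1}_\mu+\O(\dt^{p})$, that is $\bsigma=\b y^{n+1}+\O(\dt^{p})$ in vector form. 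Because the MPRK scheme has order $p$, one has $\b y^{n+1}=\b y(t_n+\dt)+\O(\dt^{p+1})$, and as the regularity assumption yields $\fnu\in\mathcal C^{p+1}\subseteq\mathcal C^{p}$, Theorem~\ref{thm:anasolNB} applied with $p-1$ in place of $p$ gives $\b y(t_n+\dt)=\NB_{p-1}(\tfrac1\gamma,\b y^n)+\O(\dt^{p})$. Chaining the three relations produces $\bsigma=\NB_{p-1}(\tfrac1\gamma,\b y^n)+\O(\dt^{p})$.

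The computation itself is short; the step requiring genuine care is the first one, namely correctly reducing the recursively defined $u(\tau,\b y^n,\dt)$ on single-node trees to $y^{n+1}_\mu/\sigma_\mu$ and checking that the implied $\O$-constants there, as well as in the subsequent inversion, are locally uniform in $\b y^n$ on the positive orthant. Once that reduction is in place, the rest is bookkeeping with the consistency of $\b b$, the order hypothesis, and Theorem~\ref{thm:anasolNB}.
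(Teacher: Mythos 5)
Your proof is correct and follows the same route as the one the paper points to (the paper itself only cites \cite{NSARK} for this lemma, but its later remark before Theorem~\ref{thm:sigma} --- ``by using the condition $\sum_{j=1}^s\bar b_j(\theta)=\theta$'' --- makes clear that the intended argument is precisely the evaluation of the order condition \eqref{eq:u=1:gamma} on the single-node trees, exploiting $\sum_{i}b_i=1$, which is exactly what you do). The reduction of $u$ on $\rt[]^{[\mu]}$ to $y^{n+1}_\mu/\sigma_\mu$, the inversion of $1+\O(\dt^p)$, and the chaining through the order-$p$ local error and Theorem~\ref{thm:anasolNB} with $p-1$ are all sound.
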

\section{Dense Output Formulae}
Since MPRK methods are based on explicit RK schemes, we first look at dense output formulae for these methods. An $s$-stage explicit RK method applied to \eqref{eq:PDS} reads
\begin{equation} \label{eq:RK}
	\begin{aligned}
	\yi_k & = y^n_k + \dt \sum_{j=1}^{i-1} a_{ij} \sum_{\nu=1}^N \left( p_{k\nu}(\byj) - d_{k\nu}(\byj) \right),  \quad i=1,\dotsc,s, \\
	y^{n+1}_k & = y^n_k + \dt \sum_{j=1}^s b_j \sum_{\nu=1}^N \left( p_{k\nu}(\byj) - d_{k\nu}(\byj)  \right),\quad  k=1,\dotsc,N.
\end{aligned}	
\end{equation}

A dense output formula now replaces $b_j\in \R$ by a function $\bar b_j\colon [0,1]\to \R$ such that
\begin{equation}\label{eq:RKdense}
	y^{n+\theta}_k = y^n_k + \dt \sum_{j=1}^s \bar b_j(\theta) \sum_{\nu=1}^N \left( p_{k\nu}(\byj) - d_{k\nu}(\byj)  \right)
\end{equation} 
is an approximation to $y_k(t^n+\theta\dt)$. With that in mind, it is natural to impose $\bar b_j(0)=0$ and $\bar b_j(1)=b_j$ to recover
\begin{equation}\label{eq:inner_consistency}
	\b y^{n+\theta}=\begin{cases}
		\b y^n,& \theta = 0,\\
		\b y^{n+1}, & \theta = 1.
	\end{cases}
\end{equation}

In the case of MPRK schemes, it thus seems to be natural to set 
\begin{equation}\label{eq:DO_ansatz1}
y^{n+\theta}_k = y^n_k + \dt \sum_{j=1}^s \bar b_j(\theta) \sum_{\nu=1}^N \left( p_{k\nu}(\byj)\frac{y^{n+1}_\nu}{\sigma_\nu} - d_{k\nu}(\byj) \frac{y^{n+1}_k}{\sigma_k} \right)
\end{equation}
to obtain an explicit dense output formula, which is applied after $\b y^{n+1}$ is computed. In this case, it is easily seen analogously to \eg \cite[Theorem 6.1]{HNW1993} that the analytic solution satisfies $\b y(t+\theta\dt)=\NB^\theta_p(\frac1\gamma,\b y)+\O(\dt^{p+1}),$
where \[\NB^\theta_p(u,\b y)\coloneqq\b y+ \sum_{\tau\in NT_p}\frac{(\theta\dt)^{\lvert \tau\rvert}}{\sigma(\tau)}u(\tau)\dF(\tau)(\b y).\] Hence, the dense output formula is of order $p^*$ if and only if 
	\begin{equation}\label{eq:u=theta:gamma}
		\begin{aligned}
	u(\tau, \b y^n,\dt,\theta)&=\frac{\theta^{\lvert \tau \rvert}}{\gamma(\tau)}+\O(\dt^{p^*+1-\lvert \tau \rvert}), \quad \forall \tau\in NT_{p^*}, \quad \text{where}\\
		u(\tau,\b y^n,\dt,\theta)&=\sum_{\nu=1}^N\sum_{i=1}^sb_i^{[\nu]}(\b y^n,\dt,\theta) g_i^{[\nu]}(\tau,\b y^n,\dt)
	\end{aligned}
\end{equation}
as in \eqref{eq:pertcond} with 
\begin{equation}\label{eq:bjtheta_exp}
	b_i^{[\nu]}(\b y^n,\dt,\theta)=\bar b_j(\theta)\frac{y^{n+1}_\nu}{\sigma_\nu}.
\end{equation}
Having stated the order conditions, it is worth noting that an MPRK method of order $p$ only needs to be equipped with a dense output formula of order $p^*=p-1$ to have an overall convergence rate of order $p$, see \cite[Section II.6]{HNW1993}. We also note that these conditions not guarantee positivity of \eqref{eq:DO_ansatz1}, while conservativity is naturally satisfied. Still, already in \cite{Zennaro1986,KLJK17} an unconditional positive and conservative dense output formula of order $p^*=1$ is constructed, which we present in the following section.

\subsection{First Order Dense Output}
For the construction of a first order formula, we directly use the condition \eqref{eq:u=theta:gamma} with the ansatz \eqref{eq:bjtheta_exp} for $p^*=1$, reading
\begin{equation}\label{eq:MPRKord1}
	u(\rt[]^{[\mu]},\b y^n,\dt,\theta)=\sum_{j=1}^s \bar b_j(\theta)\frac{y^{n+1}_\mu}{\sigma_\mu}=\theta +\O(\dt).
\end{equation}
Hence, using $\bar b_j(\theta)=\theta b_j$ representing a piecewise linear interpolant of the numerical data $\b y^n$, we see that $u(\rt[]^{[\mu]},\b y^n,\dt,\theta)=\theta u(\rt[]^{[\mu]},\b y^n,\dt)$. Hence, this choice of $\bar b_j$ yields a first order dense output formula according to \eqref{eq:u=1:gamma} for any MPRK method of order $p\geq 1$. 

To see positivity of the formula, it is beneficial to rewrite it as
	\begin{align}
		y^{n+\theta}_k &= y^n_k + \theta\dt \sum_{j=1}^s b_j \sum_{\nu=1}^N \left( p_{k\nu}(\byj)\frac{y^{n+1}_\nu}{\sigma_\nu} - d_{k\nu}(\byj) \frac{y^{n+1}_k}{\sigma_k} \right)\label{eq:first_order}\\
		&=(1-\theta)y^n_k+ \theta \left(y^n_k+\dt \sum_{j=1}^s b_j \sum_{\nu=1}^N \left( p_{k\nu}(\byj)\frac{y^{n+1}_\nu}{\sigma_\nu} - d_{k\nu}(\byj) \frac{y^{n+1}_k}{\sigma_k} \right)\right)\nonumber\\
		&=(1-\theta)y^n_k+\theta y_k^{n+1},\nonumber
	\end{align}
that is
\begin{equation}\label{eq:DO_ord1}
	\b y^{n+\theta}=(1-\theta)\b y^n+\theta \b y^{n+1}.
\end{equation}
Now since this is a convex combination of positive data, unconditional positivity can be seen immediately. 

With that, the one parameter family of second order MPRK schemes, MPRK22$(\alpha)$ with $\alpha\geq \frac12$ from \cite{KM18} can be equipped as follows.
\begin{align}
	y_k^{(1)} =&\, y_k^n,\nonumber\\ 
	y_k^{(2)} =&\, y_k^n + \alpha\Delta t\sum_{\nu=1}^N\left(p_{k\nu}(\b y^{(1)})\frac{y_\nu^{(2)}}{y_\nu^n}-d_{k\nu}(\b y^{(1)})\frac{y_k^{(2)}}{y_k^n}\right),\nonumber\\ 
	y_k^{n+1} =&\, y_k^n + \Delta t\sum_{\nu=1}^N\left( \Biggl(\biggl(1-\frac1{2\alpha}\biggr) p_{k\nu}(\b y^{(1)})+\frac1{2\alpha} p_{k\nu}(\b y^{(2)})\Biggr)\frac{y_\nu^{n+1}}{(y_\nu^{(2)})^{\frac{1}{\alpha}}(y_\nu^n)^{1-\frac{1}{\alpha}}}\right.\nonumber\\
	& \left. - \Biggl(\biggl(1-\frac1{2\alpha}\biggr) d_{k\nu}(\b y^{(1)})+ \frac1{2\alpha} d_{k\nu}(\b y^{(2)})\Biggr)\frac{y_k^{n+1}}{(y_k^{(2)})^{\frac{1}{\alpha}}(y_k^n)^{1-\frac{1}{\alpha}}}\right),\nonumber\\
		\b y^{n+\theta}&=(1-\theta)\b y^n+\theta \b y^{n+1}.\nonumber
\end{align}	
\subsection{Discussion of Higher Order Explicit Dense Output Formulae}
Explicit positivity preserving dense output formulae are also considered in \cite{KLJK17}, however, there strong-stability-preserving RK (SSPRK) schemes are equipped for which positivity is only guaranteed under some time step constraint. So, if we use the same formula for $p^*=2$, namely
\begin{equation}\label{eq:DO_bj_2nd}
	\bar b_1(\theta)=\theta -(1-b_1)\theta^2,\quad \bar b_j(\theta)=\theta^2b_j, \quad j=2,\dotsc,s,
\end{equation}
we may end up with a second order dense output formula (which is still to be proven), however, we cannot expect it to be unconditionally positive. Indeed, looking at a simple linear PDS
\begin{equation}\label{eq:test_IVP}
	\b y'(t)=\begin{pmatrix*}[r] -5 & 1\\5 & -1
	\end{pmatrix*} \b y(t),\quad \b y^0=(0.99,1)^T
\end{equation}
and approximate it with the third order MPRK43(1,0.5) scheme from \cite{KM18Order3}, we observe that the first component of the numerical solution is undershooting $0$, see Figure~\ref{fig:MPRK43IDO}.
\begin{figure}[htbp]
	\centering
	\includegraphics[width=.7\textwidth]{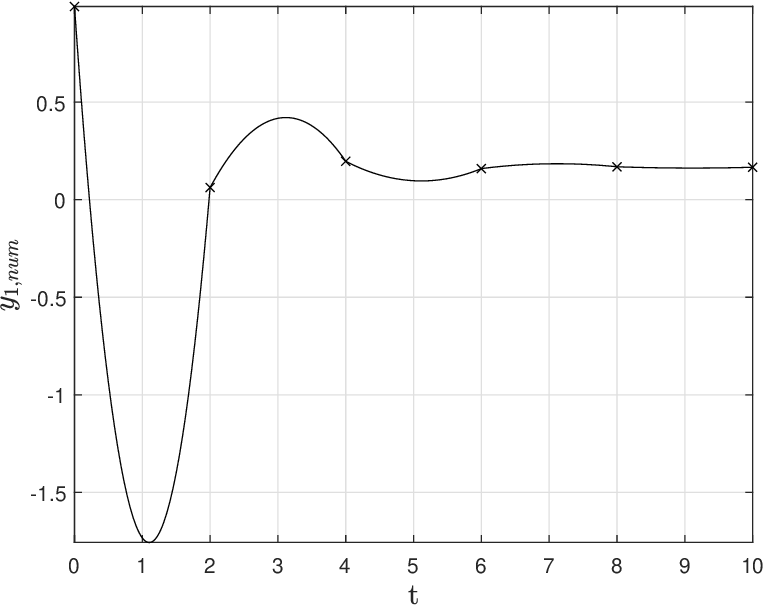}
	\caption{Numerical approximating $y_{1}^n$ of \eqref{eq:test_IVP} using MPRK43(1,0.5) from \cite{KM18Order3} with $\dt=2$, and the dense output using \eqref{eq:DO_ansatz1}, \eqref{eq:DO_bj_2nd}.}\label{fig:MPRK43IDO}
\end{figure}
We clearly see that even though the numerical approximation remains positive, the same is not true for its dense output. Also, since the formula \eqref{eq:DO_ansatz1} is conservative, \ie $y_1^{n+\theta} + y_2^{n+\theta}=y_1^0+y_2^0=1$, we deduce that $y_2^{n+\theta}$ is overshooting $1$. This problem also occurs when using higher order interpolating polynomials such as for the commonly used cubic Hermite interpolation \cite{HS2008}. Thus, one may try to fulfill sufficient conditions for the non-negativity of cubic Hermite polynomial. One such condition is derived in \cite{DEH1989}, which reads
\[ -3 \frac{y_j^n}{\dt}\leq f_j(\b y^n)\leq3 \frac{y_j^n}{\dt} \quad \forall j=1,\dotsc,N,\quad  n=0,1,\dotsc\]
for positive data.
Since a dense output formula should not change the computed numerical approximations, one now may be tempted to disturb the slopes $f_j(\b y^n)$ by some $\epsilon_{jn}=\O(\dt^p)$ using, for instance, a polynomial and additionally impose a conservativity constraint $\sum_{j=1}^N\epsilon_{jn}=0$ resulting in a linear optimization problem. However, the resulting problem may only possess a solution for $\dt$ small enough, but clearly none except $\O(\dt^p)=\epsilon_{jn}=-f_j(\b y^n)$  (\ie using slopes $0$) as $\dt\to\infty$ since $y_j^n$ is bounded due to the conservativity. However, $f_j(\b y^n)=\O(\dt^p)$ is not fulfilled in general. 

Finally, in view of \eqref{eq:DO_ord1}, the construction of dense output formulae with \eqref{eq:inner_consistency} as a convex combination of the stage vectors, the update $\b y^{n+1}$, and potentially the PWDs, is a valid candidate. However, this approach becomes more and more involved as the required order and number of stages increase, and thus, a search for such formulae has not been conducted.

Altogether, instead of using the explicit formula \eqref{eq:DO_ansatz1}, we rather propose an implicit formula which fits naturally into the MPRK framework. We update the formula \eqref{eq:DO_ansatz1} for $p^*\geq 2$ to
\begin{equation}\label{eq:DO_ansatz2}
	y^{n+\theta}_k = y^n_k + \dt \sum_{j=1}^s \bar b_j(\theta) \sum_{\nu=1}^N \left( p_{k\nu}(\byj)\frac{y^{n+\theta}_\nu}{\bar\sigma_\nu(\theta)} - d_{k\nu}(\byj) \frac{y^{n+\theta}_k}{\bar\sigma_k(\theta)} \right),
\end{equation}
where $\bar\bsigma(\theta)$ is yet to be determined. Now, this formula is linearly implicit and conservative, and as long as $\bar b_j(\theta)\geq 0$ and $\bar\sigma_\nu(\theta)>0$ for all $\nu=1,\dotsc,N$, the formula returns a positive output \cite{KM18}. Indeed, this is already the case in \eqref{eq:first_order} since $b_j\in[0,1]$ implies $\bar b_j(\theta)=\theta b_j\in [0,1]$.

If the positivity of $\bar b_j$ is not guaranteed for a given $\theta$, a positive and conservative approximation can be achieved by using an index function, see \cite{MPDeC} for the details. In particular, \eqref{eq:DO_ansatz2} becomes
\begin{equation}\label{eq:DO_ansatz2_index}
	y^{n+\theta}_k = y^n_k + \dt \sum_{j=1}^s \bar b_j(\theta) \sum_{\nu=1}^N \left( p_{k\nu}(\byj)\frac{y^{n+\theta}_{\delta(\nu,k,\bar b_j(\theta))}}{\bar\sigma_{\delta(\nu,k,\bar b_j(\theta))}(\theta)} - d_{k\nu}(\byj) \frac{y^{n+\theta}_{\delta(k,\nu,\bar b_j(\theta))}}{\bar\sigma_{\delta(k,\nu,\bar b_j(\theta))}(\theta)} \right),
\end{equation}
with the index function
\begin{equation}\label{eq:indexfun}
	\delta(\nu,k,x)=\begin{cases}
		\nu, &x\geq 0,\\
		k, & x< 0.
	\end{cases}
\end{equation}
Looking at \eqref{eq:DO_ansatz2}, a linear system has to be solved for any additional point in time for which an approximation is needed. However, this differs from applying the method with smaller $\dt$ since the formula \eqref{eq:DO_ansatz2} uses the same stage vectors, which only need to be calculated once.

Similarly to the proof of Lemma~\ref{lem:sigma}, we see that $\bar\bsigma(\theta)$ needs to be a $(p-1)$-th order approximation to $\b y(t^n+\theta \dt)$ by using the condition $\sum_{j=1}^s\bar b_j(\theta)=\theta$ for a first order RK dense output formula. This is particularly the reason why we introduced its dependency on $\theta$ in the first place.

The key idea to achieve a $(p-1)$-th order approximation to $\b y(t^n+\theta \dt)$ is to use a lower order dense output formula, which opens up the door for our boot-strapping process. Also note that the order conditions \eqref{eq:u=theta:gamma} remain the same and only \eqref{eq:bjtheta_exp} is replaced by
\begin{equation}\label{eq:bjtheta_imp}
	b_i^{[\nu]}(\b y^n,\dt,\theta)=\bar b_j(\theta)\frac{y^{n+\theta}_\nu}{\bar\sigma_\nu(\theta)}.
\end{equation}
We want to note at this point that we assume $	b_i^{[\nu]}(\b y^n,\dt,0)=0$ as well as $	b_i^{[\nu]}(\b y^n,\dt,1)=	b_i^{[\nu]}(\b y^n,\dt)$
 for the inner consistency.
\section{Boot-Strapping Process for Higher Order Dense Output}
For the following analysis we assume that $\bar\bsigma(\theta)>\b 0$ is a continuous function of $\b y^n$ and the stages. Then \cite[Lemma 4.6]{IzginThesis}  implies $\b y^{n+\theta}=\O(1)$ as $\dt \to 0$ and $\frac{y^{n+\theta}_\mu}{\bar\sigma_\mu(\theta)}=\O(1)$ as $\dt \to 0$. Furthermore, \cite[Lemma 4.8]{IzginThesis} justifies the implication
\[\bar\bsigma(\theta)=\b y^{n+\theta} +\O(\dt^k) \Longrightarrow \frac{y^{n+\theta}_\mu}{\bar\sigma_\mu(\theta)}=1+\O(\dt^k),\quad \mu=1,\dotsc,N.\] We will use these results without further notice. 
Also, a significant result simplifying the construction of a dense output formula is the following variant of \cite[Corollary 4.3]{IzginThesis}, which is not restricted to MPRK schemes let alone a specific form of $b_i^{[\nu]}(\b y^n,\dt,\theta)$.
\begin{lemma}\label{lem:suff_cond_dense} Let $\b A,\b b,\b c$ define an RK method of order $\hat p\geq 1$ and let \eqref{eq:RKdense} be a dense output formula of order $\hat p^*=\max\{\hat p-1,1\}$.
	If  \begin{equation}\label{eq:lem_suff}
		a_{ij}^{[\nu]}(\b y^n,\dt)=a_{ij}+\O(\dt^{\hat p^*-1}) \qta b_j^{[\nu]}(\b y^n,\dt,\theta)=\bar b_j(\theta)+\O(\dt^{\hat p^*}),
	\end{equation} for $i,j=1,\dotsc,s$ and $\nu=1,\dotsc,N$, then the method 
	\begin{equation}\label{eq:nsrk}
		\begin{aligned}
			\byi & = \b y^n + \dt \sum_{j=1}^s  \sum_{\substack{\nu=1}}^N a^{[\nu]}_{ij}(\b y^n,\dt)  \fnu(\byj), \quad i=1,\dotsc,s,\\
			\b y^{n+\theta} & = \b y^n + \dt \sum_{j=1}^s \sum_{\substack{\nu=1}}^N b^{[\nu]}_j(\b y^n,\dt,\theta) \fnu(\byj).
		\end{aligned}
	\end{equation} satisfies $\b y^{n+\theta}=\NB^\theta_{\hat p^*}(\frac1\gamma,\b y^n)+\O(\dt^{\hat p^*+1})$.
\end{lemma}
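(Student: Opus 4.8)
The plan is to reduce the statement to the NB-series order condition \eqref{eq:u=theta:gamma} at level $p^*=\hat p^*$. As in the proof of \cite[Corollary~4.3]{IzginThesis} and \cite{NSARK} (the only new ingredient being the $\theta$-dependence), the output of \eqref{eq:nsrk} admits the expansion
\[
\b y^{n+\theta}=\b y^n+\sum_{\tau\in NT_{\hat p^*}}\frac{\dt^{\lvert\tau\rvert}}{\sigma(\tau)}\,u(\tau,\b y^n,\dt,\theta)\,\dF(\tau)(\b y^n)+\O(\dt^{\hat p^*+1}),
\]
with $u,g_i^{[\nu]},d_i$ as in \eqref{eq:pertcond} but built from the coefficients $a_{ij}^{[\nu]}(\b y^n,\dt)$, $b_i^{[\nu]}(\b y^n,\dt,\theta)$ of \eqref{eq:nsrk}, whereas Theorem~\ref{thm:anasolNB} applied with time step $\theta\dt$ gives $\b y(t^n+\theta\dt)=\NB^\theta_{\hat p^*}(\tfrac1\gamma,\b y^n)+\O(\dt^{\hat p^*+1})$. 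Comparing the two expansions term by term (as in the derivation of \eqref{eq:u=1:gamma}), the asserted estimate becomes equivalent to $u(\tau,\b y^n,\dt,\theta)=\theta^{\lvert\tau\rvert}/\gamma(\tau)+\O(\dt^{\hat p^*+1-\lvert\tau\rvert})$ for all $\tau\in NT_{\hat p^*}$, so it is this that I would verify. Throughout I would invoke \cite[Lemma~4.6]{IzginThesis} to guarantee that the stages, the output and the weights are $\O(1)$ as $\dt\to0$, so that products of finitely many of them behave as expected.

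The first step is a reference identity for the exact coefficients. Setting $a_{ij}^{[\nu]}=a_{ij}$ and $b_i^{[\nu]}(\cdot,\cdot,\theta)=\bar b_i(\theta)$ collapses the additive splitting in \eqref{eq:nsrk} and reproduces exactly the RK method \eqref{eq:RK} equipped with the dense output \eqref{eq:RKdense}, which by hypothesis has order $\hat p^*$. Writing $\hat d_i$, $\hat u$ for the resulting functions of \eqref{eq:pertcond} --- they depend only on the underlying uncoloured tree, are $\dt$-independent polynomials in $\theta$, and satisfy $\hat u([\tau_1,\dots,\tau_l]^{[\mu]})=\sum_i\bar b_i(\theta)\prod_k\hat d_i(\tau_k)$ and $\hat u(\rt[]^{[\mu]})=\sum_i\bar b_i(\theta)$ --- the equivalence of the previous paragraph, applied to this reduced scheme, gives $\hat u(\tau)=\theta^{\lvert\tau\rvert}/\gamma(\tau)+\O(\dt^{\hat p^*+1-\lvert\tau\rvert})$ for $\lvert\tau\rvert\le\hat p^*$; since the left-hand side carries no $\dt$, letting $\dt\to0$ yields the exact identities $\hat u(\tau)=\theta^{\lvert\tau\rvert}/\gamma(\tau)$ for all $\tau$ with $\lvert\tau\rvert\le\hat p^*$, in particular $\sum_i\bar b_i(\theta)=\theta$.

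The second step is a perturbation estimate, by induction on $\lvert\tau\rvert$, using the hypotheses \eqref{eq:lem_suff}. One first checks $d_i(\tau,\b y^n,\dt)=\hat d_i(\tau)+\O(\dt^{\hat p^*-\lvert\tau\rvert})$ for every $\tau$ with $\lvert\tau\rvert\le\hat p^*-1$: for $\tau=\rt[]^{[\mu]}$ this is just $\sum_j(a_{ij}^{[\mu]}-a_{ij})=\O(\dt^{\hat p^*-1})$; for $\tau=[\tau_1,\dots,\tau_l]^{[\mu]}$ one inserts the inductive bounds into $d_i(\tau)=\sum_j a_{ij}^{[\mu]}\prod_k d_j(\tau_k)$, where perturbing the $\O(1)$ factors $d_j(\tau_k)$ contributes $\O(\dt^{\hat p^*-\max_k\lvert\tau_k\rvert})$ and perturbing $a_{ij}$ contributes $\O(\dt^{\hat p^*-1})$, both lying in $\O(\dt^{\hat p^*-\lvert\tau\rvert})$ since $\lvert\tau\rvert\ge1$ and $\max_k\lvert\tau_k\rvert\le\lvert\tau\rvert-1$. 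Substituting this into $u(\tau)=\sum_i b_i^{[\mu]}\prod_k d_i(\tau_k)$ for $\lvert\tau\rvert\le\hat p^*$ (so every child has $\lvert\tau_k\rvert\le\hat p^*-1$), and using $b_i^{[\mu]}=\bar b_i(\theta)+\O(\dt^{\hat p^*})$ together with the reference identity $\sum_i\bar b_i(\theta)\prod_k\hat d_i(\tau_k)=\theta^{\lvert\tau\rvert}/\gamma(\tau)$, the remainder splits into an $\O(\dt^{\hat p^*})$ contribution from the $b$-perturbation and an $\O(\dt^{\hat p^*-\lvert\tau\rvert+1})$ contribution from replacing $d_i$ by $\hat d_i$, both inside $\O(\dt^{\hat p^*+1-\lvert\tau\rvert})$; the single-node case $u(\rt[]^{[\mu]})=\sum_i b_i^{[\mu]}=\theta+\O(\dt^{\hat p^*})$ follows at once from $\sum_i\bar b_i(\theta)=\theta$. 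This is exactly \eqref{eq:u=theta:gamma} with $p^*=\hat p^*$, and the lemma follows.

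I expect the only genuine difficulty to be the bookkeeping of the $\O$-exponents through the recursion; conceptually the argument merely says that the order conditions for \eqref{eq:nsrk} are perturbations of the classical order conditions for the underlying RK dense output. The subtle point is why $a_{ij}^{[\nu]}$ is allowed to be one power less accurate than $b_i^{[\nu]}$: the coefficients $a_{ij}$ enter $u(\tau)$ only through proper subtrees, hence only for $\lvert\tau\rvert\ge2$, where the admissible error $\O(\dt^{\hat p^*+1-\lvert\tau\rvert})$ already tolerates one lost power, while $b_i$ appears directly in $u(\rt[]^{[\mu]})$, where the full $\O(\dt^{\hat p^*})$ is required. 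Finally, $\hat p\ge1$ is used only through $\hat p^*=\max\{\hat p-1,1\}\ge1$, which is exactly what makes $\O(\dt^{\hat p^*})\subseteq\O(\dt^{\hat p^*+1-\lvert\tau\rvert})$ hold for single-node trees and what guarantees that the order-$\hat p^*$ dense output assumed in the statement exists.
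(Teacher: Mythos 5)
Your proof is correct and takes essentially the same route as the paper: the paper simply cites \cite[Corollary 4.3]{IzginThesis} for the NB-series expansion with perturbed weights and says the $\theta$-dependent case is proved ``along the same lines,'' whereas you spell out exactly what that entails, namely the tree-indexed induction showing $d_i(\tau)=\hat d_i(\tau)+\O(\dt^{\hat p^*-\lvert\tau\rvert})$ and $u(\tau,\theta)=\hat u(\tau,\theta)+\O(\dt^{\hat p^*+1-\lvert\tau\rvert})$, followed by the observation that the $\dt$-independent $\hat u$ satisfies the classical RK dense-output conditions exactly. Your account is more self-contained but not a different argument; your remark on why $a_{ij}^{[\nu]}$ may be one power less accurate than $b_j^{[\nu]}$ correctly captures the mechanism hidden in the cited corollary.
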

\begin{proof}
	Since we assumed $b^{[\nu]}_j(\b y^n,\dt,1)=b^{[\nu]}_j(\b y^n,\dt)$ for inner consistency, we see from \cite[Corollary 4.3]{IzginThesis} that $\b y^{n+1}=\NB_{\hat p^*}(\frac1\gamma,\b y^n)+\O(\dt^{\hat p^*+1})$. Indeed, along the same lines it can be seen that $\b y^{n+\theta}=\b y^n+\sum_{\tau\in NT_{\hat p^*}}\frac{\dt^{\lvert \tau\rvert}}{\sigma(\tau)}u(\tau,\theta) \dF(\tau)(\b y^n)+\O(\dt^{\hat p^*+1})$, where $u$ is obtained from \eqref{eq:pertcond} by replacing $a_{ij}^{[\nu]}(\b y^n,\dt)$ by $a_{ij}$ and $b_j^{[\nu]}(\b y^n,\dt,\theta)$ by $\bar b_j(\theta)$. Now, since the dense output formula of the RK scheme is assumed to be of order $\hat p^*$, we see $\b y^{n+\theta}=\NB^\theta_{\hat p^*}(\frac1\gamma,\b y^n)+\O(\dt^{\hat p^*+1})$.
\end{proof}
We used $\hat p$ and $\hat p^*$ in this lemma because we will not use it with $\hat p=p$ and $\hat p^*=p^*$ as will be seen in the proof of Theorem~\ref{thm:sigma}.
This lemma needs three ingredients to return the requested dense output formula for the MPRK scheme. First, a dense output formula for the underlying RK scheme returning $\bar b_j(\theta)$, which may be done by a boot-strapping process. Second, an MPRK method of a certain order. Third, the condition \eqref{eq:lem_suff} needs to be fulfilled. Luckily, all MPRK methods of order $p\in \{2,3,4\}$ with an underlying RK scheme with $s=p$ stages satisfy $a_{ij}^{[\nu]}(\b y^n,\dt)=a_{ij}+\O(\dt^{\max\{p-2,1\}})$, \ie $\frac{y_k^{(i)}}{\pi_k^{(i)}}=1+\O(\dt^{\max\{p-2,1\}})$, see \cite[Theorems~4.12,~4.13,~4.15]{IzginThesis}. Altogether, this motivates us the assumption of the following result.
\begin{theorem}\label{thm:sigma}
Let the RK scheme \eqref{eq:RK} be of order $p\geq 2$, equipped with a dense output formula \eqref{eq:RKdense} of order $p^*=p-1$. Furthermore, let the corresponding MPRK scheme \eqref{eq:MPRK} be of order $p$ and assume $\frac{y_k^{(i)}}{\pi_k^{(i)}}=1+\O(\dt^{\max\{p-2,1\}})$. 
If $\bar\bsigma(\theta)=\NB^\theta_{p^*-1}(\frac{1}{\gamma},\b y^n)+~\O(\dt^{p^*})$, then the dense output formula \eqref{eq:bjtheta_imp} has a convergence rate of order $p$.
\end{theorem}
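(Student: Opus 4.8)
The plan is to prove, by induction on $k$, the family of local estimates
\[ \b y^{n+\theta}=\NB^\theta_k(\tfrac1\gamma,\b y^n)+\O(\dt^{k+1}),\qquad k=0,1,\dots,p-1, \]
uniformly in $\theta\in[0,1]$, applying Lemma~\ref{lem:suff_cond_dense} once at each level $k\ge1$ with $\hat p=k+1$ and $\hat p^*=k$, and then to read off from the case $k=p-1$, together with the $\theta$-analogue of Theorem~\ref{thm:anasolNB}, that $\b y^{n+\theta}=\b y(t^n+\theta\dt)+\O(\dt^p)$; by \cite[Section II.6]{HNW1993} this local order $p^*=p-1$ is exactly a global convergence rate of order $p$. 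A single application of Lemma~\ref{lem:suff_cond_dense} with $\hat p=p$, $\hat p^*=p^*$ does not work directly, because of the chicken-and-egg structure of condition \eqref{eq:lem_suff}: its second part demands $b_j^{[\nu]}(\b y^n,\dt,\theta)=\bar b_j(\theta)+\O(\dt^{p-1})$, \ie $\bar\bsigma(\theta)=\b y^{n+\theta}+\O(\dt^{p-1})$, whereas the hypothesis on $\bar\bsigma(\theta)$ only pins it down to that order against the \emph{exact} solution $\b y(t^n+\theta\dt)$, and the discrepancy $\b y^{n+\theta}-\b y(t^n+\theta\dt)$ is precisely what we are trying to bound. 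Climbing through the intermediate orders circumvents this, since at level $k$ only the already-proven level-$(k-1)$ accuracy of $\b y^{n+\theta}$ is needed.

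For the base case $k=0$, the standing continuity and positivity of $\bar\bsigma(\theta)$ give, via \cite[Lemma 4.6]{IzginThesis}, that $y^{n+\theta}_\mu/\bar\sigma_\mu(\theta)=\O(1)$, so the formula \eqref{eq:DO_ansatz2} at once yields $\b y^{n+\theta}=\b y^n+\O(\dt)=\NB^\theta_0(\tfrac1\gamma,\b y^n)+\O(\dt)$. For the step from $k-1$ to $k$ with $1\le k\le p-1$, I would combine three estimates against the exact solution: the induction hypothesis and the $\theta$-analogue of Theorem~\ref{thm:anasolNB} for $\NB^\theta_{k-1}$ give $\b y^{n+\theta}=\b y(t^n+\theta\dt)+\O(\dt^k)$; the hypothesis $\bar\bsigma(\theta)=\NB^\theta_{p^*-1}(\tfrac1\gamma,\b y^n)+\O(\dt^{p^*})$ with $p^*=p-1\ge k$ gives $\bar\bsigma(\theta)=\b y(t^n+\theta\dt)+\O(\dt^k)$; hence $\bar\bsigma(\theta)=\b y^{n+\theta}+\O(\dt^k)$, and \cite[Lemma 4.8]{IzginThesis} upgrades this to $y^{n+\theta}_\nu/\bar\sigma_\nu(\theta)=1+\O(\dt^k)$, that is, $b_j^{[\nu]}(\b y^n,\dt,\theta)=\bar b_j(\theta)+\O(\dt^k)$. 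The remaining half of \eqref{eq:lem_suff}, $a_{ij}^{[\nu]}(\b y^n,\dt)=a_{ij}+\O(\dt^{k-1})$, is immediate from the assumption $y^{(i)}_k/\pi^{(i)}_k=1+\O(\dt^{\max\{p-2,1\}})$ since $k-1\le p-2\le\max\{p-2,1\}$. With \eqref{eq:lem_suff} in force, Lemma~\ref{lem:suff_cond_dense} applies with $\hat p=k+1$ and $\hat p^*=\max\{k,1\}=k$ — the order-$p$ RK scheme is in particular of order $k+1$, and its order-$(p-1)$ dense output in particular of order $k$ — and since \eqref{eq:nsrk} fed with the weights from \eqref{eq:NSW} and \eqref{eq:bjtheta_imp} coincides with the MPRK scheme \eqref{eq:MPRK} together with the dense output \eqref{eq:DO_ansatz2}, it returns $\b y^{n+\theta}=\NB^\theta_k(\tfrac1\gamma,\b y^n)+\O(\dt^{k+1})$, which closes the induction.

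The heart of the argument, and its only genuine obstacle, is the circularity isolated above: feeding $\bar\bsigma(\theta)$ back into \eqref{eq:lem_suff} requires $\b y^{n+\theta}$ to be known to some order, and that accuracy can only be gained one order at a time. Everything else is $\O$-exponent bookkeeping; the points worth checking carefully are that the hypotheses of Lemma~\ref{lem:suff_cond_dense} hold with a margin at every level (indeed $p-1\ge k$ and $\max\{p-2,1\}\ge k-1$ throughout $1\le k\le p-1$, which uses $p\ge2$), that the index-function variant \eqref{eq:DO_ansatz2_index} is treated in the same way because the swapped Patankar ratios are again $1+\O(\dt^k)$, and that the inner-consistency conditions $b_j^{[\nu]}(\b y^n,\dt,0)=0$ and $b_j^{[\nu]}(\b y^n,\dt,1)=b_j^{[\nu]}(\b y^n,\dt)$ used in the proof of Lemma~\ref{lem:suff_cond_dense} are part of the standing assumptions on the weights \eqref{eq:bjtheta_imp}.
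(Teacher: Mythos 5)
Your proof is correct and follows essentially the same route as the paper's: a one-order-at-a-time induction that alternately sharpens the Patankar ratio $y^{n+\theta}_\mu/\bar\sigma_\mu(\theta)=1+\O(\dt^k)$ and the NB-series estimate $\b y^{n+\theta}=\NB^\theta_k(\tfrac1\gamma,\b y^n)+\O(\dt^{k+1})$ via Lemma~\ref{lem:suff_cond_dense}, finishing with \cite[Section II.6]{HNW1993}. The only cosmetic differences are that you route the comparison of $\bar\bsigma(\theta)$ with $\b y^{n+\theta}$ through the exact solution rather than matching NB-series directly, and that you make explicit the $\hat p=k+1$, $\hat p^*=k$ bookkeeping and the circularity that forces the induction, both of which the paper leaves implicit.
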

\begin{proof}
Every assumption of Lemma~\ref{lem:suff_cond_dense} but \[ b_j^{[\nu]}(\b y^n,\dt,\theta)=\bar b_j(\theta)+\O(\dt^{\hat p})\] is satisfied for any $1\leq\hat p\leq p-1=p^*$. Thus, if we even proved \[\frac{y^{n+\theta}_\mu}{\bar\sigma_\mu(\theta)}=1+\O(\dt^{p^*}),\] Lemma~\ref{lem:suff_cond_dense} would imply that $\b y^{n+\theta}=\NB^\theta_{p^*}(\frac1\gamma,\b y^n)+\O(\dt^{p^*+1})$. Since $p^*+1= p$ we could then deduce from the order $p$ of the MPRK method and \cite[Section II.6]{HNW1993} that the dense output formula is convergent of order $p$.

Now, we have already discussed that $\frac{y^{n+\theta}_\mu}{\bar\sigma_\mu(\theta)}=\O(1)$. Introducing this into \eqref{eq:DO_ansatz2}, we see $\b y^{n+\theta} = \b y^n+\O(\dt)=\NB_0^\theta(\frac1\gamma,\b y^n) +\O(\dt)$. 
From this we conclude $\b y^{n+\theta}=\bar\bsigma(\theta) +\O(\dt)$, which implies $\frac{y^{n+\theta}_\mu}{\bar\sigma_\mu(\theta)}=1+\O(\dt)$. If $p^*>1$, we use Lemma~\ref{lem:suff_cond_dense} with $\hat p=1$ to receive $\b y^{n+\theta} =\NB_1^\theta(\frac1\gamma,\b y^n) +\O(\dt^2)$ and thus $\frac{y^{n+\theta}_\mu}{\bar\sigma_\mu(\theta)}=1+\O(\dt^2)$ by the same reasoning as above. By induction we deduce $\frac{y^{n+\theta}_\mu}{\bar\sigma_\mu(\theta)}=1+\O(\dt^{p^*})$. 

\end{proof}
This theorem now puts us in the position to construct a dense output formula for MPRK schemes up to order four and to describe our boot-strapping technique. Interpreting MPDeC methods from \cite{MPDeC} as MPRK schemes and to develop dense output formulae for arbitrary high-order MPDeC schemes is outside the scope of this work and left for future work.

We already have a first order dense output formula at our disposal. Hence, we continue constructing a second order formula. 
\subsection{Second Order Dense output}
We apply Theorem~\ref{thm:sigma} using $p=3$  looking at a third order MPRK scheme, so that with $p^*=2$ we seek $\bar\bsigma(\theta)$ to be merely a first order dense output formula. To that end, we simply use our first order dense output formula setting 
\begin{equation}\label{eq:sigma(theta)_1}
\bar\bsigma(\theta)=(1-\theta)\b y^n+\theta \bsigma.
\end{equation}
Since we also have a second order dense output formula for the underlying RK scheme, \eg using \eqref{eq:DO_bj_2nd}, we end up with the following result.
\begin{theorem} Consider an MPRK scheme \eqref{eq:MPRK} of order $p=3$ based on an $3$-stage RK scheme of order $3$. Then, using $\bar\bsigma(\theta)$ from \eqref{eq:sigma(theta)_1} and $\bar b_j(\theta)$ from \eqref{eq:DO_bj_2nd}, the formula \eqref{eq:bjtheta_imp} is convergent of order three.
\end{theorem}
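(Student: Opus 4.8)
The plan is to derive the statement as a direct application of Theorem~\ref{thm:sigma} with $p=3$, so that $p^*=p-1=2$. Three of the hypotheses of that theorem are already available: the underlying $3$-stage RK scheme has order $3$ and the MPRK scheme has order $3$ by assumption, and since $\max\{p-2,1\}=1$ the bound $\frac{y_k^{(i)}}{\pi_k^{(i)}}=1+\O(\dt)$ holds for every order-$3$ MPRK scheme built on a $3$-stage RK method, by \cite[Theorem~4.13]{IzginThesis} (see also the discussion preceding Theorem~\ref{thm:sigma}). It remains to check the two ingredients that the theorem requires of the chosen $\bar b_j(\theta)$ and $\bar\bsigma(\theta)$: first, that \eqref{eq:DO_bj_2nd} defines a dense output formula of order $p^*=2$ for the RK scheme; second, that \eqref{eq:sigma(theta)_1} satisfies $\bar\bsigma(\theta)=\NB^\theta_{p^*-1}(\tfrac1\gamma,\b y^n)+\O(\dt^{p^*})=\NB^\theta_1(\tfrac1\gamma,\b y^n)+\O(\dt^2)$ and is a positive, continuous function of $\b y^n$ and the stages.

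For the first ingredient I would invoke the classical order conditions for RK dense output formulae (see \cite[Section II.6]{HNW1993}): order $2$ amounts to $\sum_{j=1}^s\bar b_j(\theta)=\theta$ and $\sum_{j=1}^s\bar b_j(\theta)c_j=\tfrac{\theta^2}{2}$. Using $c_1=0$ and the order conditions $\sum_{j=1}^s b_j=1$, $\sum_{j=1}^s b_jc_j=\tfrac12$ (valid since the RK scheme has order at least two), the ansatz \eqref{eq:DO_bj_2nd} gives $\sum_{j=1}^s\bar b_j(\theta)=\theta-(1-b_1)\theta^2+\theta^2\sum_{j=2}^s b_j=\theta$ and $\sum_{j=1}^s\bar b_j(\theta)c_j=\theta^2\sum_{j=2}^s b_jc_j=\tfrac{\theta^2}{2}$, both in fact exactly; hence \eqref{eq:DO_bj_2nd} is a second order RK dense output formula. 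I would also record that $\bar b_1(\theta)=\theta\bigl(1-(1-b_1)\theta\bigr)\geq0$ and $\bar b_j(\theta)=\theta^2 b_j\geq0$ on $[0,1]$, using $0\leq b_1\leq1$, so that the linearly implicit formula \eqref{eq:DO_ansatz2}/\eqref{eq:bjtheta_imp} is uniquely solvable by the $M$-matrix argument of \cite{KM18}.

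For the second ingredient, Lemma~\ref{lem:sigma} applied to the order-$3$ MPRK scheme gives $\bsigma=\NB_{2}(\tfrac1\gamma,\b y^n)+\O(\dt^3)$; keeping only the order-one contribution, which equals $\dt\sum_{\nu=1}^N\fnu(\b y^n)=\dt\b f(\b y^n)$, yields $\bsigma=\b y^n+\dt\b f(\b y^n)+\O(\dt^2)$. Substituting \eqref{eq:sigma(theta)_1} and using $\theta\in[0,1]$ together with $\NB^\theta_1(\tfrac1\gamma,\b y^n)=\b y^n+\theta\dt\b f(\b y^n)$,
\[
\bar\bsigma(\theta)=(1-\theta)\b y^n+\theta\bsigma=\b y^n+\theta\dt\b f(\b y^n)+\O(\dt^2)=\NB^\theta_1(\tfrac1\gamma,\b y^n)+\O(\dt^2).
\]
Moreover $\bar\bsigma(\theta)$ is a convex combination of the positive, continuous quantities $\b y^n$ and $\bsigma$ (the latter a PWD, hence positive and continuous in $\b y^n$ and the stages), so it inherits positivity and continuity.

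Collecting these facts, all hypotheses of Theorem~\ref{thm:sigma} hold for $p=3$, and that theorem then yields convergence of order $3$ for the dense output formula \eqref{eq:bjtheta_imp}. I do not foresee a genuine obstacle, since the claim is essentially a corollary of Theorem~\ref{thm:sigma}; the only points requiring care are confirming that \eqref{eq:DO_bj_2nd}, which is merely conjectured to be second order in the discussion above, does satisfy the dense output order conditions, and the small bookkeeping in the second ingredient showing that truncating $\bsigma$ to first order and forming the convex combination \eqref{eq:sigma(theta)_1} reproduces $\NB^\theta_1(\tfrac1\gamma,\b y^n)$ up to $\O(\dt^2)$.
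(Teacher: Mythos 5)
Your proposal is correct and follows exactly the route the paper intends: the theorem is stated in the paper without an explicit proof precisely because it is a direct corollary of Theorem~\ref{thm:sigma} with $p=3$, $p^*=2$, using the first-order dense output \eqref{eq:sigma(theta)_1} for $\bar\bsigma(\theta)$ and the second-order RK dense output \eqref{eq:DO_bj_2nd}. You usefully fill in the bookkeeping the paper leaves implicit --- verifying the two classical conditions $\sum_j\bar b_j(\theta)=\theta$, $\sum_j\bar b_j(\theta)c_j=\tfrac{\theta^2}{2}$ for \eqref{eq:DO_bj_2nd}, invoking Lemma~\ref{lem:sigma} to get $\bsigma=\NB_1(\tfrac1\gamma,\b y^n)+\O(\dt^2)$, and checking positivity and continuity of $\bar\bsigma(\theta)$ --- but the underlying argument is the same.
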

There are two families of third order MPRK schemes mentioned in \cite{KM18Order3} which can be equipped with this formula. Moreover, even the third order MPDeC method satisfies the assumption $\frac{y_k^{(i)}}{\pi_k^{(i)}}=1+\O(\dt)$ (after adapting the notation)  due to \cite[Lemmas~4.9,~4.10]{MPDeC}, and thus can be equipped with the same formula. 
The corresponding dense output formula for the schemes in \cite{KM18Order3} can be written as 
\begin{align}
	y^{(1)}_k &= y^n_k,\nonumber\\
	y^{(2)}_k &= y^n_k
	+  a_{21} \dt \sum_{\nu=1}^N\left(
	p_{k\nu}\bigl( \b y^n \bigr) \frac{y^{(2)}_\nu}{y^n_\nu}
	- d_{k\nu}\bigl( \b y^n \bigr) \frac{y^{(2)}_k}{y^n_k}
	\right),
	\nonumber\\
	y^{(3)}_k &= y^n_k
	+\Delta t \sum_{\nu=1}^N
	\Biggl(\left(a_{31} p_{k\nu}\bigl(\b y^n\bigr)+ a_{32} p_{k\nu} \bigl(\b y^{(2)}\bigr) \right)  \frac{  y_\nu^{(3)}
	}{\bigl(y_\nu^{(2)}\bigr)^{\frac1p } \bigl(y_\nu^n\bigr)^{1-\frac1p} }
	\nonumber\\
	& \qquad\qquad\qquad
	-\left(a_{31} d_{k\nu}\bigl(\b y^n\bigr)+ a_{32} d_{k\nu} \bigl(\b y^{(2)}\bigr) \right)  \frac{  y_k^{(3)}
	}{\bigl(y_k^{(2)}\bigr)^{\frac1p } \bigl(y_k^n\bigr)^{1-\frac1p} }
	\Biggr),
	\nonumber\\
	\hphantom{baselineskip}&\nonumber\\
	\sigma_k &= y_k^n + \Delta t \sum_{\nu=1}^N
	\Biggl(\left( \beta_1 p_{k\nu} \bigl( \b y^n\bigr) +\beta_2 p_{k\nu} \bigl(\b y^{(2)}\bigr)  \right) \frac{\sigma_\nu}{\bigl(y_\nu^{(2)} \bigr)^{\frac1q}
		\bigl(y_\nu^n\bigr)^{1-\frac1q}}
	\nonumber\\ & \qquad-
	\left( \beta_1 d_{k\nu} \bigl( \b y^n\bigr) +\beta_2 d_{k\nu} \bigl(\b y^{(2)}\bigr)  \right) \frac{\sigma_k}{\bigl(y_k^{(2)} \bigr)^{\frac1q}
		\bigl(y_k^n\bigr)^{1-\frac1q}}\Biggr), \label{eq:sigmaMPRK43}\\
	\bar\bsigma(\theta)&=(1-\theta)\b y^n+\theta \bsigma,\nonumber \\
	y^{n+\theta}_k &= y^n_k
	+ \dt \sum_{\nu=1}^N \Biggl(
	\left( (\theta -(1-b_1)\theta^2)p_{k\nu}\bigl( \b y^n \bigr) +\theta^2b_2p_{k\nu}\bigl( \b y^{(2)} \bigr)
	+\theta^2 b_3 p_{k\nu}\bigl( \b y^{(3)} \bigr)
	\right) \frac{y^{n+\theta}_\nu}{\bar \sigma_\nu(\theta)}
	\nonumber\\&\qquad\qquad\qquad
	- \left( (\theta -(1-b_1)\theta^2) d_{k\nu}\bigl( \b y^n \bigr) +\theta^2b_2d_{k\nu}\bigl( \b y^{(2)} \bigr)
	+ \theta^2b_3 d_{k\nu}\bigl( \b y^{(3)} \bigr)
	\right) \frac{y^{n+\theta}_k}{\bar\sigma_k(\theta)}
	\Biggr),	\label{eq:MPRK43-family}
\end{align}
where $p=3 a_{21}\left(a_{31}+a_{32} \right)b_3,\; q=a_{21},\;\beta_2=\frac{1}{2a_{21}}$ and $\beta_1= 1-\beta_2$.
\subsection{Third Order Dense Output}
Since the condition $\tfrac{y_k^{(i)}}{\pi_k^{(i)}}=1+\O(\dt^{p-2})$ for the fourth order MPDeC method from \cite{MPDeC} is not yet investigated, we focus from now on MPRK schemes, for which this condition is proven, see \cite[Theorem 4.15]{IzginThesis}. First, we recall corresponding the dense output formula derived in \cite{Zennaro1986}, \ie
\[\bar b_1(\theta)=2(1-4b_1)\theta^3+3(3b_1-1)\theta^2+\theta,\quad \bar b_i(\theta)=4(3c_i-2)b_i\theta^3+3(3-4c_i)b_i\theta^2, i=2,3,4.\]
The fourth order MPRK scheme from \cite{NSARK,IzginThesis} is based on the classical RK scheme of order $4$ described by
	\begin{equation*}
	\begin{aligned}
		\def\arraystretch{1.2}
		\begin{array}{c|cccc}
			0 &  & & & \\
			\frac12 & \frac12 & & &\\
			\frac12 &0 &\frac12 & &\\
			1 &0 & 0 &1 &\\
			\hline
			& \frac16 &\frac13&\frac13 & \frac16
		\end{array}
	\end{aligned}
\end{equation*}
Thus, we have
\begin{equation}
	\begin{aligned}
		\bar b_1(\theta)=\tfrac23\theta^3-\tfrac32\theta^2+\theta,\quad
		\bar b_2(\theta) = 	\bar b_3(\theta) =-\tfrac23\theta^3+\theta^2,	\quad	 \bar b_4(\theta)=\tfrac23\theta^3-\tfrac12\theta^2,
	\end{aligned}
\end{equation}
for which $\bar b_i(\theta)\geq 0$ for $i=1,2,3$, however, for instance $\bar b_4(\tfrac12)<0$. As a result of this, we need to introduce the index function into the dense output formula \eqref{eq:DO_ansatz2} which results in \eqref{eq:DO_ansatz2_index}. Now this shows that even though the above Butcher tableau is non-negative, the corresponding dense output tableau is not. In recent works such as \cite{izgin2023nonglobal, IssuesMPRK} inferior stability properties for such schemes are discovered rising the question of whether this formula yields another such example. However, the investigation of this question is outside the scope of this work.

Now, the fourth order MPRK scheme is constructed as follows. Due to \eqref{eq:DO_ansatz2_index}, we only need to specify $\bm \pi^{(i)}$ for $i=2,3,4$  and $\bssigma$. First, the third order MPRK scheme (\ie \eqref{eq:MPRK43-family} with $\theta =1$) is used to compute $\bsigma$. Secondly, the embedded second order method \eqref{eq:sigmaMPRK43} is used with time steps $c_i\dt$ is used to compute $\bm \pi^{(i)}$ for $i=2,3,4$ resulting in $\frac{\yi_\nu}{\pi_\nu^{(i)}}=1+\O(\dt^3)$, see \cite{IzginThesis} for more details.

Now, since the third order MPRK scheme is used to compute $\bsigma$ of the fourth order MPRK scheme, we can simply use the corresponding second order dense output formula to define $\bar \bsigma(\theta)$ of this fourth order method. Furthermore, the fourth order MPRK scheme is constructed satisfying the sufficient conditions of \cite[Corollary 4.3]{IzginThesis}, and hence, the assumption $\tfrac{y_k^{(i)}}{\pi_k^{(i)}}=1+\O(\dt^{p-2})$ of Theorem~\ref{thm:sigma} is naturally satisfied.  

Since the overall MPRK method has itself 10 stages, we will not write out the dense output formula out.

\subsection{Boot-Strapping Process}
The key observation here is to use the lower order dense output formula to define $\bar\bsigma(\theta)$ of the new, higher order method. With this, the boot-strapping process reduces to use known dense output formulae of an RK scheme and to check the condition $\frac{y_k^{(i)}}{\pi_k^{(i)}}=1+\O(\dt^{p-2})$ (looking at $p\geq 3$). We note that the latter is always fulfilled, if the MPRK scheme is constructed using the sufficient condition stated in \cite[Corollary~4.3]{IzginThesis}, first introduced in \cite{NSARK}. We also observed that this condition is even necessary for $p\in \{2,3,4\}$. However, a discussion of whether this condition is necessary for every $p\geq 2$ is still an open research topic.

\section{Summary and Outlook}
In this work we have developed a boot-strapping technique to equip modified Patankar--Runge--Kutta (MPRK) methods with a dense output formula of appropriate accuracy. We have stated the corresponding order conditions for the formula and successively constructed formulae for MPRK schemes up to order four. There, the first order dense output formula is explicit while the remaining ones are linearly implicit. Still, these formulae are unconditional positive and conservative, which is the natural requirement we imposed since the MPRK schemes have this property. In addition, the additional computational effort is still less than using the method with a smaller step size since the stage vectors only need to be computed once. We have also discussed the possibility and issues of different approaches for designing a dense output formula. However, the presented approach involving linearly implicit formulae has the advantage of being generalized easily also for different Patankar-type schemes such as modified Patankar Deferred Correction (MPDeC) methods. Indeed, the we found that the first and second order dense output formula can be used to equip second and third order MPDeC schemes, respectively. The discussion of higher order MPDeC schemes is left for future works. To that end, the investigation of the property $\frac{y_k^{(i)}}{\pi_k^{(i)}}=1+\O(\dt^{p-2})$ for $p\geq 3$ is of interest.

Furthermore, we demonstrated that even though the MPRK scheme may be based on a non-negative Butcher tableau, the corresponding dense output formulae may result in negative values $\bar b_j(\theta)$ for some $\theta\in[0,1]$ necessitating the use of the index function \eqref{eq:indexfun}. Now, since schemes based on a partially negative Butcher tableau showed inferior stability properties \cite{IssuesMPRK,izgin2023nonglobal}, further investigations of such formulae is needed and left as a future research topic.
\section*{Acknowledgments}
The author T.\ Izgin gratefully acknowledges the financial support by the Deutsche Forschungsgemeinschaft (DFG) through the grant ME 1889/10-1 (DFG project number 466355003).

\bibliography{cas-refs}

\end{document}